\documentclass[11pt]{article}

\usepackage[a4paper,margin=1in]{geometry}
\usepackage[T1]{fontenc}
\usepackage{lmodern}
\usepackage{amsmath,amssymb,amsthm,mathtools}
\usepackage{microtype}
\usepackage[hidelinks]{hyperref}
\hypersetup{pdftitle={The Ding--Song--Sun inequality via a maximum principle},
 pdfauthor={Fu-Hsuan Ho},
 pdfsubject={A finite-graph proof for the Ellis--Monroe--Newman potential class}}

\newcommand{\R}{\mathbb R}
\newcommand{\dd}[1]{\,\mathrm d#1}
\DeclarePairedDelimiter{\abs}{\lvert}{\rvert}
\DeclareMathOperator{\Cov}{Cov}
\DeclareMathOperator{\Var}{Var}
\newcommand{\indic}{\mathbf 1}
\numberwithin{equation}{section}

\newtheorem{theorem}{Theorem}[section]
\newtheorem{lemma}[theorem]{Lemma}
\newtheorem{corollary}[theorem]{Corollary}
\theoremstyle{definition}
\newtheorem{definition}[theorem]{Definition}
\theoremstyle{remark}
\newtheorem{remark}[theorem]{Remark}

\title{The Ding--Song--Sun inequality via a maximum principle}
\author{Fu-Hsuan Ho\\
  \small Department of Mathematics, Weizmann Institute of Science\\
  \small 76100 Rehovot, Israel\\
  \small\texttt{fu-hsuan.ho@weizmann.ac.il}}
\date{}

\begin{document}
\maketitle
\vspace{-1.8em}

\begin{abstract}
We prove the Ding--Song--Sun inequality for continuous spin models on finite
ferromagnetic graphs whose even single-site potentials have convex
derivatives on the positive half of their domain. This class, introduced by Ellis,
Monroe and Newman, includes the $\varphi^4$ and sinh-Gordon potentials.
The proof uses a rank-one interpolation of the interactions and a maximum
principle for a cooperative parabolic system on a half-plane. The boundary
conditions follow by induction on the number of vertices and the number of
positive coordinates of the field increment. In particular, the truncated
two-point function in an arbitrary external field is bounded above by its
zero-field value.
\end{abstract}

\section{Introduction and main result}

Ding, Song and Sun \cite{DSS23} proved that, in a finite ferromagnetic
Ising model, the change in magnetization caused by a nonnegative field
increment is largest when the background field vanishes. Their inequality
implies that the truncated two-point function is maximal at zero external
field, even when the competing field has mixed signs. We prove the same
comparison for the potential class introduced by Ellis, Monroe and Newman
\cite{EMN76}.

Our argument combines the one-site GHS inequality with FKG positivity.
A rank-one perturbation of the interactions gives a heat equation for the
partition function and a cooperative parabolic equation for the
magnetization deficit. We prove positivity of this deficit by a maximum
principle on a two-variable half-plane. A choice of rank-one perturbation
that restores the edges incident to one vertex closes a double induction.
Symmetric truncation reduces the proof to bounded spins.

\paragraph{Acknowledgment}
Large Language Model is functioned at the level of coauthor throughout this 
project, which includes conducting numerical experiments, literature reviews and 
suggesting proof strategy. 

\paragraph{Related work.}
Abdelghani, Bauerschmidt, Bodineau and Dagallier
\cite[Theorem~1.1]{ABBD26} have recently proved the DSS inequality for
even $C^3$ potentials that are unbounded above and have convex derivatives on the
positive half-line, with a strictly positive definite ferromagnetic
quadratic form. Their proof uses a
Gaussian convolution of the potential and a parabolic maximum principle.
The rank-one interpolation used here gives a direct evolution of the
magnetization deficit on a half-plane. Both arguments use induction to
establish the parabolic boundary conditions.

\subsection{Setting and statement}

Let $G=(V,E)$ be a finite graph with $\abs{V}\geq1$. Write $J_{uv}=J_{vu}\geq0$
for its interaction strengths, set $J_{vv}=0$, and put $J_{uv}=0$ when
$\{u,v\}\notin E$. Thus all models can equivalently be viewed on the
complete graph, with some couplings equal to zero.
Fix an ordering of $V$; sums over $u<v$ count each
unordered pair once. Vector inequalities are understood coordinatewise,
and $e_v$ denotes the $v$th coordinate vector.
For single-site probability measures $(\nu_v)_{v\in V}$ and an external field
$g=(g_v)_{v\in V}\in\R^V$, define
\begin{equation}
\label{eq:gibbs}
 \mu_g(\dd{x})
 =\frac1{Z(g)}
 \exp\left\{\sum_{u<v}J_{uv}x_ux_v+\sum_{v\in V}g_vx_v\right\}
 \prod_{v\in V}\nu_v(\dd{x_v}).
\end{equation}
Let $\Cov_g$ denote covariance under $\mu_g$, and write
\[
 m_o(g)=\int x_o\,\mu_g(\dd{x})=\partial_{g_o}\log Z(g).
\]

\begin{definition}
\label{def:class}
At each vertex $v\in V$, let $0<a_v\leq\infty$ and let
$V_v\in C^1((-a_v,a_v))$ be even, with $V_v'$ convex on $[0,a_v)$.
Assume
\[
 Z_v:=\int_{-a_v}^{a_v}e^{-V_v(x)}\dd{x}<\infty,
\]
and define the probability measure
\begin{equation}
\label{eq:potential-class}
 \nu_v(\dd{x})
 =\frac1{Z_v}\indic_{\{\abs{x}<a_v\}}e^{-V_v(x)}\dd{x}.
\end{equation}
\end{definition}

\begin{remark}
These are the potential measures in \cite[Theorem~1.2(c),(d)]{EMN76}.
The theorem below concerns this potential class; it does not assert the
comparison for every measure in the abstract class $\mathcal G_-$ of
\cite{EMN76}.
Convexity of $V_v$ itself is not required. Examples include
\[
 V_v(x)=\alpha_v x^4+\beta_vx^2
 \quad\text{and}\quad
 V_v(x)=\alpha_v\cosh x+\beta_vx^2,
 \qquad \alpha_v>0,\quad \beta_v\in\R.
\]
These examples appear in \cite[(1.8)--(1.9)]{EMN76}; both give
$Z(f)<\infty$ for all couplings and fields.
\end{remark}

\begin{theorem}[Ding--Song--Sun inequality]
\label{thm:dss}
Suppose that all single-site measures satisfy Definition~\ref{def:class},
and that $Z(f)<\infty$ for every $f\in\R^V$. Then, for every $o\in V$,
$g\in\R^V$, and $h\in[0,\infty)^V$,
\begin{equation}
\label{eq:dss}
 m_o(g+h)-m_o(g-h)
 \leq m_o(h)-m_o(-h)=2m_o(h).
\end{equation}
Equivalently,
\begin{equation}
\label{eq:midpoint}
 2m_o(h)-m_o(h+g)-m_o(h-g)\geq0.
\end{equation}
\end{theorem}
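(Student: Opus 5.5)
We follow the strategy announced in the introduction: a rank-one interpolation of the couplings, a parabolic equation for the magnetization deficit, and a maximum principle whose boundary data are supplied by a double induction. First, by symmetric truncation (replacing $a_v=\infty$ by a large finite cutoff and letting the cutoff tend to infinity, using $Z(f)<\infty$ and dominated convergence), we reduce to bounded spins, $a_v<\infty$ for all $v$, so all moments are finite and smooth in the parameters. The claim is trivial for $\abs{V}=1$: there \eqref{eq:midpoint} says that $m(h)$ dominates the midpoint of $m(h+g)$ and $m(h-g)$, and since $m$ is odd this is the concavity-type statement of the one-site GHS inequality from \cite{EMN76}, namely that $m$ is concave on $[0,\infty)$ and odd. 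We then argue by induction on $\abs{V}$, and, for fixed $\abs{V}$, by induction on the number $k$ of strictly positive coordinates of $h$. The case $k=0$ is trivial, since both sides of \eqref{eq:midpoint} vanish.

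For the inductive step, fix a vertex $w$ with $h_w>0$ and consider the rank-one interpolation $J^{(t)}_{uv}=J_{uv}$ for $u,v\neq w$ and $J^{(t)}_{wv}=tJ_{wv}$. We rewrite the interaction incident to $w$ through a Gaussian (Hubbard--Stratonovich) variable, so that $Z$ becomes a heat-type evolution in a two-variable parameter. Concretely, we parametrize by $s$, the effective field on $w$, and by a time $\tau$ built from the coupling strength, and obtain $\partial_\tau Z=\tfrac12\partial_s^2Z$ in the appropriate coordinates. Differentiating gives that the deficit
\[
D(s,\tau)=2m_o(h)-m_o(h+g)-m_o(h-g),
\]
with $s$ entering through $h_w$ or $g_w$, satisfies a linear parabolic system. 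Its coupling terms are covariances that are nonnegative by FKG/GKS, so the system is cooperative. It is posed on the half-plane $\{s\geq0\}$, the restriction coming from $h_w\geq0$.

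A maximum principle for cooperative systems then gives $D\geq0$ in the interior, provided two kinds of boundary data are nonnegative. On the initial line $\tau=0$, the vertex $w$ is decoupled, so the model splits into $V\setminus\{w\}$ and the single site $w$; nonnegativity there follows from the induction hypothesis on $\abs{V}$ together with the one-site case. On the edge $s=0$, i.e.\ $h_w=0$, the field $h$ has $k-1$ positive coordinates, so nonnegativity follows from the induction hypothesis on $k$. Growth at infinity is controlled because the spins are bounded, which makes $D$ and its derivatives bounded.

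The main obstacle is deriving the evolution equation so that it closes, in the sense that the zero-order and cross terms have the signs needed for cooperativity. Here the EMN hypothesis ($V_v'$ convex on $[0,a_v)$) enters via GHS: in the difference of second derivatives of $\log Z$ at $h$ versus $h\pm g$ we need the truncated correlations to be ordered. I would first try to write $\partial_\tau D-\tfrac12\partial_s^2 D$ as a sum of terms of the form $c\cdot D'$, where $D'$ is the deficit for the pair $(o,w)$ and $c\geq0$ is a covariance. This would make a coupled system in the unknowns $(D_o,D_w)$ with nonnegative off-diagonal coefficients, to which the maximum principle applies.
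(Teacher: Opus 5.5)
\textbf{Verdict: genuine gap.} Your outline has the paper's architecture: truncation, the one-site GHS base case, and lexicographic induction on $(\abs{V},\abs{V_+})$. It also uses a cooperative maximum principle whose initial data come from decoupling one vertex and whose lateral data come from zeroing its increment. But the evolution equation at its core does not exist in the form you set up.

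\textbf{First gap: no heat equation.} Write $v$ for your chosen vertex (keeping $w$ for a vector, as in the paper) and $\ell_u=J_{vu}$. Scaling only the edges at $v$ gives $\partial_tZ=\partial_{f_v}\partial_\ell Z$ with $\partial_\ell=\sum_u\ell_u\partial_{f_u}$.
\begin{itemize}
\item The perturbation $x_v(\ell\cdot x)$ is an indefinite rank-two form, so its generator is a mixed, hyperbolic-type operator rather than a heat operator.
\item A Hubbard--Stratonovich representation of $e^{tx_v(\ell\cdot x)}$ would need a difference of squares, i.e.\ a non-positive Gaussian weight.
\item In particular, $\partial_\tau Z=\tfrac12\partial_s^2Z$ with $s=f_v$ cannot hold. $\partial_{f_v}^2$ brings down $x_v^2$, which corresponds to a single-site tilt and changes no coupling at all.
\end{itemize}
The missing idea is to use a genuine square. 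Add $\tfrac t2(w\cdot x)^2-\tfrac12\sum_uw_u^2x_u^2$ to the model $K$ with the edges at $v$ deleted, so that $\partial_tZ=\tfrac12\partial_w^2Z$ exactly. The diagonal is absorbed into the single-site laws $e^{-(1-t)w_u^2x_u^2/2}\nu_u$, which stay in the class by the quadratic-tilt part of Lemma~\ref{lem:one-site}. The choice $w_v=\varepsilon^{-1/2}$, $w_u=\varepsilon^{1/2}J_{vu}$ gives $w_vw_u=J_{vu}$, while the spurious couplings $\varepsilon J_{vu}J_{v\ell}$ vanish as $\varepsilon\downarrow0$. The target is recovered only in that limit, at $(t,q,r)=(1,\sqrt\varepsilon\,h_v,0)$. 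The induction hypotheses must therefore range over all ferromagnetic couplings and all single-site laws in the class, since they are applied to these modified models.

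\textbf{Second gap: one spatial variable does not close.} Even with a heat equation, a deficit in a single spatial variable leaves an unsigned term. Suppose only the increment moves along the diffusion direction, with $m^0=m(t,k+sw)$, $m^\pm=m(t,k+sw\pm g)$, $b=\tfrac12w\cdot(m^++m^-)$ and $c_o=\partial_wm_o^0$. A direct computation from $\partial_tm_o=\tfrac12\partial_w^2m_o+(w\cdot m)\partial_wm_o$ gives
\[
 \partial_tD_o-\tfrac12\partial_s^2D_o-b\,\partial_sD_o-c_o\sum_uw_uD_u
 =-\tfrac12\bigl(w\cdot(m^+-m^-)\bigr)\bigl(\partial_wm_o^+-\partial_wm_o^-\bigr),
\]
and the right-hand side has no sign. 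The paper also moves the background: increment $k+qw$, background $g+rw$. This turns the leftover into the transport term $b_r\partial_rD_o$, which vanishes at a first contact point. That is why the maximum principle is posed on the $(q,r)$ half-plane with diffusion only in $q$.

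\textbf{Misplaced use of GHS.} Your plan to get cooperativity from a GHS ordering of truncated correlations targets the wrong mechanism. The zero-order coefficient is $c_o=\sum_uw_u\Cov(x_o,x_u)\geq0$ by FKG alone. It couples $D_o$ to every $D_u$ with $w_u>0$, not only to the pair $(D_o,D_v)$. GHS enters only through the one-site base case.
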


The equivalence follows from global spin-flip symmetry,
$m_o(-f)=-m_o(f)$. The finiteness assumption ensures that $Z$ and every
$m_o$ are smooth on $\R^V$, and that all polynomial moments are finite.
Indeed, for fields in a fixed compact box, the absolute value of any
polynomial factor times the Gibbs weight is dominated by a constant times
the sum of the weights at the corners of a larger box. This justifies differentiation under
the integral; in particular,
\begin{equation}
\label{eq:field-derivative}
 \partial_{f_v}m_o(f)=\Cov_f(x_o,x_v).
\end{equation}

The Ding--Song--Sun inequality immediately gives the following zero-field
covariance bound.

\begin{corollary}
\label{cor:covariance}
Under the hypotheses of Theorem~\ref{thm:dss}, for every $g\in\R^V$ and
$o,v\in V$,
\begin{equation}
\label{eq:covariance-majorant}
 0\leq\Cov_g(x_o,x_v)\leq\Cov_0(x_o,x_v).
\end{equation}
\end{corollary}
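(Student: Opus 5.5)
We derive Corollary~\ref{cor:covariance} from Theorem~\ref{thm:dss} together with the field-derivative formula \eqref{eq:field-derivative}.

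\emph{Lower bound.} The bound $\Cov_g(x_o,x_v)\ge0$ is Griffiths' second inequality in a general field. It is also implied by FKG: the measures in Definition~\ref{def:class} are product measures with ferromagnetic pair interactions, and $x_o,x_v$ are increasing functions. One can also get it from the theorem itself. Take $g=0$ in \eqref{eq:dss} to get $m_o(h)\ge0$ for $h\ge0$. Then apply this in the form $m_o(g+te_v)-m_o(g-te_v)\ge0$, which is the monotonicity of $m_o$ in $f_v$ (GKS~II). Since the paper already uses FKG positivity, we simply cite FKG here.

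\emph{Upper bound.} Fix $g$ and $v$, and take $h=te_v$ with $t>0$, which lies in $[0,\infty)^V$. Then \eqref{eq:dss} gives
\[
m_o(g+te_v)-m_o(g-te_v)\le m_o(te_v)-m_o(-te_v).
\]
Divide by $2t$ and let $t\downarrow0$. The finiteness assumption makes $m_o$ smooth, so both difference quotients converge to the derivatives in the $e_v$ direction. By \eqref{eq:field-derivative}, the left side converges to $\partial_{f_v}m_o(g)=\Cov_g(x_o,x_v)$ and the right side to $\Cov_0(x_o,x_v)$. This yields \eqref{eq:covariance-majorant}.

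There is no real obstacle. The only point needing care is the differentiability of $m_o$ at $g$ and at $0$, and this is supplied by the finiteness of $Z$ on all of $\R^V$, as noted after the theorem.
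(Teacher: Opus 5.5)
Your argument is the paper's: the upper bound comes from taking $h=te_v$ in \eqref{eq:dss}, dividing by $2t$ and using \eqref{eq:field-derivative}, and the lower bound is the FKG inequality \eqref{eq:fkg}, which the paper extends to unbounded spins via the truncation of Section~\ref{sec:truncation}. Delete the aside that derives the lower bound from the theorem itself. At $g=0$, inequality \eqref{eq:dss} is just the tautology $2m_o(h)\le 2m_o(h)$. Even granting $m_o(h)\ge0$ for $h\ge0$, that would not give monotonicity of $m_o$ in $f_v$ at a mixed-sign background $g$, and GKS~II is a nonnegative-field statement, so FKG is genuinely what is needed there.
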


\begin{proof}
Set $h=\eta e_v$ in \eqref{eq:dss}, divide by $2\eta$, and let
$\eta\downarrow0$, using \eqref{eq:field-derivative}.
The lower bound is the FKG inequality \eqref{eq:fkg}, extended from bounded
spins by the truncation argument below. The required second moments are
finite by the exponential-tilt bound following Theorem~\ref{thm:dss}.
\end{proof}

At zero field the means vanish by spin-flip symmetry, so the upper bound
in \eqref{eq:covariance-majorant} is also the untruncated two-point function
$\int x_ox_v\,\mu_0(\dd{x})$.

\section{One-site comparison and FKG positivity}

For a single-site law $\nu$ with potential $V$ as in
Definition~\ref{def:class}, write
\[
 m(s)=\frac{\mathrm d}{\mathrm ds}\log\int e^{sx}\nu(\dd{x}).
\]
These laws satisfy the ambient moment condition of \cite[p.~168]{EMN76}:
$\int e^{\kappa x^2}\nu(\dd{x})<\infty$ for some $\kappa>0$.
For bounded support this is immediate. On unbounded support, integrability
forces $V'(x_0)>0$ for some $x_0>0$. Convexity and $V'(0)=0$ then give
$V'(x)\geq xV'(x_0)/x_0$ for $x\geq x_0$. Integrating and using evenness
yields $V(x)\geq c_0x^2-C$ for some $c_0>0$ and $C<\infty$, so any
$0<\kappa<c_0$ works. In particular, $V(x)\to\infty$ as $\abs{x}\to\infty$,
as required in \cite[Theorem~1.2(c)]{EMN76}.
For bounded support, the density $p\propto e^{-V}$ is positive and $C^1$
on $(-a,a)$, and $p'/p=-V'$ is concave on $[0,a)$, exactly as required in
\cite[Theorem~1.2(d)]{EMN76}. Thus every law in Definition~\ref{def:class}
belongs to $\mathcal G_-$, and all its linear exponential moments are finite.

\begin{lemma}
\label{lem:one-site}
For every measure in Definition~\ref{def:class} and every $h\geq0$, the
map
\[
 g\longmapsto m(g+h)-m(g-h)
\]
is even and nonincreasing on $[0,\infty)$. In particular,
\[
 m(g+h)-m(g-h)\leq2m(h)\qquad(g\in\R).
\]

The single-site class is preserved by multiplication by $e^{-c x^2}$ and
renormalization, for every $c\geq0$. It is also preserved by symmetric
restriction to $[-L,L]$ and renormalization, for every $L>0$.
\end{lemma}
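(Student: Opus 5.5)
The plan is to reduce everything to the single-site GHS inequality, which is available because every law in Definition~\ref{def:class} belongs to $\mathcal G_-$ (as just verified via \cite[Theorem~1.2(c),(d)]{EMN76}). For a single site, the GHS inequality of \cite{EMN76} for $\mathcal G_-$ says that the third cumulant is nonpositive at nonnegative fields, i.e.\ $m''(s)\leq0$ for $s\geq0$. So $m$ is concave on $[0,\infty)$ and $m'$ is nonincreasing there. By spin-flip symmetry of $\nu$ (evenness of $V$), the function $m$ is odd, hence $m'$ is even. Smoothness of $m$ follows from the finiteness of all linear exponential moments noted above.

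With this in hand, set $F(g)=m(g+h)-m(g-h)$ for fixed $h\geq0$. Evenness follows from oddness of $m$:
\[
F(-g)=m(h-g)-m(-g-h)=-m(g-h)+m(g+h)=F(g).
\]
For monotonicity on $[0,\infty)$, differentiate:
\[
F'(g)=m'(g+h)-m'(g-h)=m'(g+h)-m'(\abs{g-h}),
\]
using that $m'$ is even. For $g,h\geq0$ we have $g+h\geq\abs{g-h}\geq0$, and $m'$ is nonincreasing on $[0,\infty)$. Hence $F'(g)\leq0$. Consequently, for every $g\in\R$, $F(g)=F(\abs g)\leq F(0)=m(h)-m(-h)=2m(h)$, which is the stated inequality.

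For the preservation statements, I would check the conditions of Definition~\ref{def:class} directly.
\begin{itemize}
\item Multiplying by $e^{-cx^2}$ replaces $V$ by $V+cx^2$. This is still even and $C^1$, and its derivative $V'+2cx$ is a convex function plus a linear one, hence convex on $[0,a)$. The new normalizer satisfies $\int e^{-V-cx^2}\leq Z_v<\infty$ and is positive.
\item For symmetric restriction to $[-L,L]$: if $L\geq a$, nothing changes. Otherwise the new law has $a'=L$ and potential $V|_{(-L,L)}$, which is still even, $C^1$, with convex derivative on $[0,L)$. Its normalizer is positive and bounded by $Z_v$. The endpoints have Lebesgue measure zero, so closed versus open intervals is immaterial.
\end{itemize}

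The only nonroutine step is the single-site GHS inequality $m''\leq0$ on $[0,\infty)$. I would take it from \cite{EMN76}, whose GHS theorem for the class $\mathcal G_-$ applies in particular to a single vertex with no interactions. Should a self-contained argument be preferred, one could instead try to prove it directly from concavity of $-V'$ on $[0,a)$ by the standard duplicated-variable argument of Ellis--Monroe--Newman, but I would rely on the citation.
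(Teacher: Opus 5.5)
Your proposal is correct and follows the paper's proof essentially step for step. The single-site GHS inequality of \cite{EMN76} (with $N=1$ and no coupling) gives $m''\le0$ on $[0,\infty)$; oddness of $m$ gives evenness of the map; and $m'(g+h)-m'(\abs{g-h})\le0$ for $g\ge0$ gives monotonicity. Your preservation checks also match the paper: $V+cx^2$ still has a convex derivative, and restriction to $[-L,L]$ gives a law in Definition~\ref{def:class} with $a'=\min(a,L)$. For the restricted case you verify the definition directly rather than appealing, as the paper does, to the absence of endpoint regularity requirements in \cite[Theorem~1.2(d)]{EMN76}.
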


\begin{proof}
By symmetry, $m$ is odd and $m'$ is even.
Apply \cite[Theorem~1.1 and (1.4)]{EMN76} with $N=1$, $J_{11}=0$, and
$h_1=s\geq0$. The three differentiation indices may coincide, giving
$m''(s)\leq0$.
Thus the displayed map is even and, for $g\geq0$,
\[
 \frac{\mathrm d}{\mathrm dg}\bigl[m(g+h)-m(g-h)\bigr]
 =m'(g+h)-m'(\abs{g-h})\leq0.
\]

The quadratic modification replaces $V$ by $V+cx^2$.
Its derivative $V'(x)+2cx$ remains convex on the positive half-interval.
Symmetric restriction leaves this condition unchanged in the interior of
the new support; \cite[Theorem~1.2(d)]{EMN76} requires no density regularity
at the endpoints.
\end{proof}

We also use FKG in the following form:
\begin{equation}
\label{eq:fkg}
 \Cov_f(x_u,x_v)\geq0
 \qquad\text{for every }f\in\R^V.
\end{equation}
For bounded spins this follows from the FKG lattice condition for the
ferromagnetic density relative to the product reference measure
\cite{FKG71}. To obtain the statement for continuous reference measures,
approximate each reference measure by its pushforward under an increasing
finite-valued discretization. The resulting finite-spin Gibbs measures
satisfy FKG, and their first and second moments converge by bounded
convergence. The external field need not have a constant sign.
The truncation argument in Section~\ref{sec:truncation} extends
\eqref{eq:fkg} to the unbounded-spin models under consideration.
No multi-site GHS inequality is needed below.

\section{Rank-one interpolation and maximum principle}

\subsection{Reduction to bounded spins}
\label{sec:truncation}

We first reduce Theorem~\ref{thm:dss} to single-site measures supported in
$[-M,M]$ for a common finite $M>0$.
For $M>0$, let $\nu_v^{(M)}=\nu_v(\,\cdot\mid[-M,M])$.
These restrictions remain in the single-site class by Lemma~\ref{lem:one-site}.
The associated Gibbs measure at field $f$ is
$\mu_f(\,\cdot\mid[-M,M]^V)$.
The bound $\abs{x_o}\leq e^{x_o}+e^{-x_o}$ and the finiteness of
$Z(f+e_o)+Z(f-e_o)$ imply that $x_o$ is integrable under $\mu_f$.
Dominated convergence therefore gives, as $M\uparrow\infty$,
\[
 m_o^{(M)}(f)
 =\frac{\int_{[-M,M]^V}x_o\,\mu_f(\dd{x})}
        {\mu_f([-M,M]^V)}
 \longrightarrow m_o(f).
\]
Passing to the limit at $f=h,h+g,h-g$ shows that it suffices to prove
\eqref{eq:midpoint} for the restricted measures.
The same argument applies to $x_ox_v$, whose absolute integrability follows
from the moment bound above. Thus the covariances of the restricted models
also converge to those of the original model, which justifies the
unbounded-spin version of \eqref{eq:fkg}.

\subsection{Interpolation and positivity preservation}

For the rest of the proof, all single-site measures are supported in
$[-M,M]$. Let $K_{uv}=K_{vu}\geq0$ with $K_{vv}=0$,
$w\in[0,\infty)^V$, and
$0\leq t\leq1$. Define
\begin{equation}
\label{eq:interpolation}
 \begin{aligned}
 Z(t,f)=\int\exp\Bigg\{&\sum_{u<v}K_{uv}x_ux_v+f\cdot x
       +\frac t2(w\cdot x)^2
       -\frac12\sum_vw_v^2x_v^2\Bigg\}
       \prod_v\nu_v(\dd{x_v}),\\
 m_o(t,f)&=\partial_{f_o}\log Z(t,f),
 \qquad \partial_w=\sum_vw_v\partial_{f_v}.
 \end{aligned}
\end{equation}
Write $m(t,f)=(m_v(t,f))_{v\in V}$, and let $\Cov_{t,f}$ denote covariance
under the Gibbs measure obtained by normalizing the integrand in
\eqref{eq:interpolation}.
The off-diagonal couplings at time $t$ are $K_{uv}+tw_uw_v\geq0$.
The single-site laws are proportional to
$e^{-(1-t)w_v^2x_v^2/2}\nu_v(\dd{x_v})$.

Fix $k\in[0,\infty)^V$ and $g\in\R^V$. For $q\geq0$ and $r\in\R$, write
\[
 m^0=m(t,k+qw),\qquad
 m^\pm=m\bigl(t,k+(q\pm r)w\pm g\bigr),
\]
and set
\begin{equation}
\label{eq:deficit}
 D_o(t,q,r)=2m_o^0-m_o^+-m_o^-.
\end{equation}
Thus the increment is $k+qw$ and the background is $g+rw$.

\begin{lemma}[Positivity preservation]
\label{lem:preservation}
Suppose that, for every $o\in V$,
\begin{equation}
\label{eq:parabolic-boundary}
 D_o(0,q,r)\geq0\quad(q\geq0,\ r\in\R),
 \qquad
 D_o(t,0,r)\geq0\quad(0\leq t\leq1,\ r\in\R).
\end{equation}
Then $D_o(t,q,r)\geq0$ for all $o\in V$, $0\leq t\leq1$, $q\geq0$, and
$r\in\R$.
\end{lemma}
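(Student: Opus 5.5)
The plan is to derive a closed parabolic system for $D=(D_o)_{o\in V}$ in the variables $(t,q,r)$, check that it is cooperative, and then run a maximum-principle argument on $[0,1]\times[0,\infty)\times\R$.

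\emph{The equation.} Since $\partial_t Z(t,f)=\tfrac12\int (w\cdot x)^2(\cdots)=\tfrac12\partial_w^2 Z$, the function $F=\log Z$ satisfies
\[
 \partial_tF=\tfrac12\bigl(\partial_w^2F+(\partial_wF)^2\bigr).
\]
Differentiating in $f_o$ gives
\[
 \partial_t m_o=\tfrac12\partial_w^2m_o+\mathcal M\,\partial_wm_o,
 \qquad \mathcal M=\partial_wF=w\cdot m .
\]
Along the three evaluation points, $\partial_w$ acts as $\partial_q$ on $m^0$ and $m^\pm$. Moreover $\partial_r m^\pm=\pm\partial_wm^\pm$ and $\partial_rm^0=0$. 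Hence each of $m^0,m^\pm$ solves $\partial_t=\tfrac12\partial_q^2+\mathcal M^\bullet\partial_q$, where $\mathcal M^\bullet$ is $\mathcal M$ evaluated at the corresponding point. Set $A=\tfrac12(\mathcal M^++\mathcal M^-)$ and $B=\tfrac12(\mathcal M^+-\mathcal M^-)$. Using $\partial_qm^+-\partial_qm^-=\partial_r(m^++m^-)=-\partial_rD$ and $\partial_q(m^++m^-)=\partial_q(2m^0-D)$, the drift terms regroup as follows:
\[
 \partial_tD_o=\tfrac12\partial_q^2D_o+A\,\partial_qD_o+B\,\partial_rD_o
 +\sum_{v}w_v\,\partial_qm^0_o\,D_v .
\]
Here I use $\mathcal M^0-A=\tfrac12 w\cdot D$. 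By \eqref{eq:fkg},
\[
 \partial_qm_o^0=\sum_u w_u\Cov_{t,k+qw}(x_o,x_u)\geq0,
\]
because the time-$t$ model is ferromagnetic with single-site laws of the type in Lemma~\ref{lem:one-site}. So the zeroth-order coupling $c_{ov}=w_v\partial_qm^0_o\geq0$ is cooperative. Bounded spins make $m$, $A$, $B$, $c_{ov}$ and $D$ bounded (by constants depending on $M$, $w$, $|V|$) and smooth in $(t,q,r)$.

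\emph{Maximum principle.} Fix $\varepsilon>0$ and put
\[
 E_o=e^{-\lambda t}D_o+\varepsilon(1+t)\,\phi(q,r),\qquad \phi=1+q^2+r^2,
\]
with $\lambda$ larger than $\sum_v\sup c_{ov}$. The operator in the $E$-equation then has a nonpositive diagonal zeroth-order coefficient while keeping nonnegative off-diagonal coefficients. Since $D$ is bounded, $E_o>0$ outside a compact set of $(q,r)$. At $t=0$ and at $q=0$ we have $E_o\geq\varepsilon\phi>0$ by \eqref{eq:parabolic-boundary}.

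Suppose some $E_o$ becomes zero. Take the first time $t_*$ at which $\min_{o,q,r}E_o=0$, attained at some $(o,q_*,r_*)$ with $q_*>0$, $t_*>0$. There $\partial_tE_o\leq0$, $\partial_qE_o=\partial_rE_o=0$ (interior in $q$; $r\in\R$ is free), $\partial_q^2E_o\geq0$, and $E_v\geq0$ for all $v$. The cooperative structure then makes the right-hand side at least
\[
 \varepsilon\phi+\varepsilon(1+t)\bigl(-\tfrac12\partial_q^2\phi-A\partial_q\phi-B\partial_r\phi\bigr)
\]
plus nonnegative terms. The drift contributions are $O(\varepsilon(1+|q|+|r|))$, which I absorb by using a time factor $e^{Kt}$ in place of $(1+t)$ with $K$ large: $K\phi$ dominates $1+|A||q|+|B||r|$. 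This gives $\partial_tE_o>0$, a contradiction. Letting $\varepsilon\downarrow0$ yields $D_o\geq0$.

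\emph{Main obstacle.} The delicate point is the derivation of the system, specifically the regrouping that makes the cross term appear as $B\,\partial_rD$ plus a coupling of the form $(w\cdot D)\,\partial_qm^0$ whose sign is controlled by FKG. The equation is degenerate in $r$ (pure transport, no diffusion), so the argument must only use first-order conditions in $r$, which the interior-minimum argument above does. I would also double-check that no boundary condition is needed as $|r|\to\infty$ beyond boundedness of $D$, which the penalty $\phi$ handles.
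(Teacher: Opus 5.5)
Your proposal is correct and follows the paper's proof step by step: the same heat equation $\partial_t Z=\tfrac12\partial_w^2 Z$, the same regrouping into drifts (your $A,B$ are the paper's $b_q,b_r$) with an FKG-signed coupling $\partial_q m^0_o\sum_v w_vD_v$, and the same first-contact maximum principle with barrier $1+q^2+r^2$. The only difference is that the paper absorbs both the drift and the coupling into a single barrier $\delta e^{At}\rho$, whereas you split this into $e^{-\lambda t}D_o$ plus $\varepsilon e^{Kt}\phi$; as you noticed, the factor $(1+t)$ alone would not work.
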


\begin{proof}
Bounded support justifies all differentiations, uniformly on compact
parameter sets. From \eqref{eq:interpolation},
\[
 \partial_t Z=\frac12\partial_w^2 Z.
\]
Hence,
\begin{equation}
\label{eq:magnetization-flow}
 \partial_t m_o(t,f)
 =\frac12\partial_w^2m_o(t,f)
  +\bigl(w\cdot m(t,f)\bigr)\partial_wm_o(t,f).
\end{equation}
Define
\[
 b_q=\frac12w\cdot(m^++m^-),\qquad
 b_r=\frac12w\cdot(m^+-m^-),\qquad
 c_o=\partial_qm_o^0.
\]
Substituting \eqref{eq:magnetization-flow} into \eqref{eq:deficit} gives
\begin{equation}
\label{eq:deficit-flow}
 \partial_tD_o
 =\frac12\partial_q^2D_o+b_q\partial_qD_o+b_r\partial_rD_o
  +c_o\sum_vw_vD_v.
\end{equation}
For the first-order terms, use
\[
 \begin{aligned}
 \partial_qD_o&=2\partial_qm_o^0-\partial_qm_o^+-\partial_qm_o^-,\\
 \partial_rD_o&=-\partial_qm_o^++\partial_qm_o^-,\\
 \sum_vw_vD_v&=2w\cdot m^0-w\cdot m^+-w\cdot m^-.
 \end{aligned}
\]
By \eqref{eq:fkg},
\begin{equation}
\label{eq:cooperative}
 c_o=\sum_vw_v\Cov_{t,k+qw}(x_o,x_v)\geq0.
\end{equation}
Thus the system \eqref{eq:deficit-flow} is cooperative: every coefficient
$c_ow_v$ coupling one component to another is nonnegative.
Furthermore, bounded support, Cauchy--Schwarz, and
$\Var(x_v)\leq M^2$ give the global bounds
\begin{equation}
\label{eq:bounded-coefficients}
 \abs{D_o}\leq4M,\qquad
 \abs{b_q},\abs{b_r}\leq M\sum_vw_v,\qquad
 0\leq c_o\leq M^2\sum_vw_v.
\end{equation}
To apply the maximum principle on the unbounded half-plane, set
\[
 \rho(q,r)=1+q^2+r^2,
 \qquad
 \mathcal L=\frac12\partial_q^2+b_q\partial_q+b_r\partial_r.
\]
Set $S=\sum_vw_v$. Since $\abs{q}+\abs{r}\leq\rho$,
\eqref{eq:bounded-coefficients} gives
\[
 \mathcal L\rho=1+2b_qq+2b_rr\leq(1+2MS)\rho,
 \qquad c_oS\leq M^2S^2.
\]
We may therefore take $A=2+2MS+M^2S^2$, so that, for every $o$,
\[
 A\rho-\mathcal L\rho-c_oS\rho\geq\rho>0.
\]
For $\delta>0$, put
$W_o=D_o+\delta e^{At}\rho$. Equation~\eqref{eq:deficit-flow} implies
\begin{equation}
\label{eq:strict-barrier}
 (\partial_t-\mathcal L)W_o-c_o\sum_vw_vW_v>0.
\end{equation}
By \eqref{eq:parabolic-boundary}, $W_o>0$ initially and on $q=0$.
The bound on $D_o$ also makes $W_o>0$ outside a fixed compact set, uniformly
in $t\in[0,1]$.

If some component became nonpositive, compactness and strict positivity at
$t=0$ would give a first zero at a time $t_*>0$ and a spatially interior
point with $q_*>0$. All components would be nonnegative there.
For the touching component, its time derivative would
be nonpositive, both first spatial derivatives would vanish, and its second
$q$-derivative would be nonnegative. In view of \eqref{eq:cooperative}, the
left-hand side of \eqref{eq:strict-barrier} would be nonpositive, a
contradiction. Letting $\delta\downarrow0$ proves the claim. No diffusion in
the $r$-direction is required, since the $r$-transport term vanishes at the
first contact point.
\end{proof}

\subsection{Completion of the proof of Theorem~\ref{thm:dss}}

For an increment $h\geq0$, set $V_+=\{v\in V:h_v>0\}$.
Following \cite[Section~2]{DSS23}, we prove Theorem~\ref{thm:dss} by
lexicographic induction on
\[
 (n,p)=(\abs{V},\abs{V_+}).
\]
For $p=0$, \eqref{eq:midpoint} is an equality by spin-flip symmetry. For
$n=1$, it is Lemma~\ref{lem:one-site}.

Fix $n\geq2$ and $1\leq p\leq n$. Assume the theorem for all models with
$n-1$ vertices and arbitrary nonnegative increments, and for all models with
$n$ vertices and fewer than $p$ positive coordinates. These hypotheses range
over all the allowed single-site measures and couplings.

Fix the target model, $g$, and $h$ with $\abs{V_+}=p$. Choose $v\in V_+$ and
set
\[
 k=h-h_ve_v.
\]
Then $k\geq0$ has $p-1$ positive coordinates. Fix $\varepsilon>0$ and define
\[
 w_v=\varepsilon^{-1/2},\qquad
 w_u=\varepsilon^{1/2}J_{vu}\quad(u\ne v),
\]
and let $K$ be obtained from $J$ by deleting all edges incident to $v$:
\[
 K_{uv}=0\quad(u\ne v),\qquad
 K_{u\ell}=J_{u\ell}\quad(u,\ell\ne v).
\]
Use this $K$ and $w$ in \eqref{eq:interpolation}. At time $t$, the single-site
laws are proportional to $e^{-(1-t)w_u^2x^2/2}\nu_u(\dd{x})$, so
Lemma~\ref{lem:one-site} keeps them in the permitted class. The couplings are
\[
 J_{vu}(t)=tJ_{vu},\qquad
 J_{u\ell}(t)=J_{u\ell}+t\varepsilon J_{vu}J_{v\ell}
 \quad(u,\ell\ne v,\ u\ne\ell).
\]
The additional edges between neighbors of $v$ are permitted because the
induction hypotheses range over all ferromagnetic couplings.

\paragraph{Initial time.}
At $t=0$, vertex $v$ is independent of the remaining $n-1$ vertices.
In \eqref{eq:deficit}, the increment is $k+qw\geq0$, and the background is
$g+rw$. If the observed site is $v$, apply the one-site comparison; otherwise,
apply the induction hypothesis on $n-1$ vertices. This gives
\[
 D_o(0,q,r)\geq0\qquad(o\in V,\ q\geq0,\ r\in\R).
\]

\paragraph{Boundary of the half-plane.}
At $q=0$, the increment in \eqref{eq:deficit} is exactly $k$. Every
intermediate model has $n$ vertices and belongs to the allowed class, while
$k$ has only $p-1$ positive coordinates. The induction hypothesis on $p$
therefore gives
\[
 D_o(t,0,r)\geq0\qquad(o\in V,\ 0\leq t\leq1,\ r\in\R).
\]
No induction hypothesis is used in the interior,
where $k+qw$ may have more than $p$ positive coordinates.

Both assumptions of Lemma~\ref{lem:preservation} now hold. Consequently,
\begin{equation}
\label{eq:interpolated-positive}
 D_o(t,q,r)\geq0
 \qquad(o\in V,\ 0\leq t\leq1,\ q\geq0,\ r\in\R).
\end{equation}

\paragraph{Recovery of the target model.}
Evaluate \eqref{eq:interpolated-positive} at
$(t,q,r)=(1,\sqrt\varepsilon\,h_v,0)$. The single-site penalties vanish.
The couplings incident to $v$ are exactly the target couplings; all other
couplings differ from the target by $\varepsilon J_{vu}J_{v\ell}$.
Moreover,
\[
 k+\sqrt\varepsilon\,h_vw
 =h+\varepsilon h_v\sum_{u\ne v}J_{vu}e_u.
\]
Consequently, as $\varepsilon\downarrow0$,
\[
 K_{u\ell}+w_uw_\ell\longrightarrow J_{u\ell}\quad(u\ne\ell),
 \qquad k+\sqrt\varepsilon\,h_vw\longrightarrow h.
\]
The Boltzmann factors at these parameters converge uniformly on the
bounded spin domain. Their integrals and first moments therefore converge,
and hence
\[
 0\leq D_o(1,\sqrt\varepsilon\,h_v,0)
 \longrightarrow 2m_o(h)-m_o(h+g)-m_o(h-g).
\]
Positivity holds for each $\varepsilon>0$, so no uniform bound on the
constants in the maximum principle is needed. This completes the induction
and the proof of Theorem~\ref{thm:dss}.\hfill$\square$


\clearpage
\begin{thebibliography}{9}

\bibitem{ABBD26}
Omar Abdelghani, Roland Bauerschmidt, Thierry Bodineau, and Benoit Dagallier.
\newblock The Ding--Song--Sun inequality for a class of even ferromagnets.
\newblock Preprint (2026).
\newblock \href{https://arxiv.org/abs/2609.08980}{arXiv:2609.08980}.

\bibitem{DSS23}
Jian Ding, Jian Song, and Rongfeng Sun.
\newblock A new correlation inequality for Ising models with external fields.
\newblock \emph{Probability Theory and Related Fields} \textbf{186} (2023),
477--492.
\newblock \href{https://doi.org/10.1007/s00440-022-01132-1}{doi:10.1007/s00440-022-01132-1}.

\bibitem{EMN76}
Richard S. Ellis, James L. Monroe, and Charles M. Newman.
\newblock The GHS and other correlation inequalities for a class of even ferromagnets.
\newblock \emph{Communications in Mathematical Physics} \textbf{46} (1976),
167--182.
\newblock \href{https://doi.org/10.1007/BF01608495}{doi:10.1007/BF01608495}.

\bibitem{FKG71}
C. M. Fortuin, P. W. Kasteleyn, and J. Ginibre.
\newblock Correlation inequalities on some partially ordered sets.
\newblock \emph{Communications in Mathematical Physics} \textbf{22} (1971),
89--103.
\newblock \href{https://doi.org/10.1007/BF01651330}{doi:10.1007/BF01651330}.

\end{thebibliography}
\end{document}